\documentclass[11pt]{article}

\usepackage[a4paper,margin=1in]{geometry}
\usepackage{amsmath,amssymb,amsfonts,amsthm}
\usepackage{bm}
\usepackage{mathtools}
\usepackage{graphicx}
\usepackage{enumitem}
\usepackage{hyperref}
\usepackage{color}
\usepackage{authblk}

\newcommand{\Lag}{\mathcal L}
\newcommand{\DLag}{\mathcal L^a}

\newtheorem{proposition}{Proposition}
\newtheorem{theorem}{Theorem}

\title{
One-Shot Optimization with Additional Inequality Constraints
}

\author[1]{Lea Fischer}
\author[2]{Nicolas R. Gauger}
\author[3]{Lisa Kusch}

\affil[1,2]{RPTU University Kaiserslautern-Landau, Kaiserslautern, Germany, nicolas.gauger(at)rptu.de}
\affil[3]{Eindhoven University of Technology, Eindhoven, The Netherlands, l.kusch(at)tue.nl}

\date{}

\begin{document}

\maketitle

\begin{abstract}

The one-shot approach is a powerful simultaneous optimization framework
for design tasks governed by computationally expensive steady-state
systems.
While previous formulations mainly focused on additional equality
constraints, this work extends the one-shot framework to optimization
problems with inequality constraints using slack variables embedded into
a doubly augmented Lagrangian formulation.

After elimination of the slack variables, the resulting formulation
contains nonsmooth active-set dependent terms while preserving the
characteristic coupled one-shot matrix structure.
The resulting generalized gradient system admits a representation of the
form
$
\nabla \DLag = -Ms,
$
analogous to the equality-constrained one-shot framework.

To analyze the nonsmooth active-set transitions, generalized second-order
subdifferentials in the sense of Clarke and Rockafellar--Wets are
employed.
We derive explicit positivity conditions ensuring positive definiteness of all admissible generalized Hessians and hence strict local optimality. 
Furthermore, we give an outlook on the construction of preconditioners based on generalized Hessian approximations.
\medskip

\textbf{Keywords:}
One-shot optimization, design optimization, inequality constraints, second-order subdifferential, preconditioner, semismooth optimization

\medskip

\noindent
49M05, 65K05, 90C30

\end{abstract}

\section{Introduction}
\label{intro}

Simultaneous analysis and design (SAND) frameworks drastically reduce
the computational overhead of PDE-constrained optimization by advancing
state variables, adjoint variables, and design variables concurrently
within a single coupled iteration. One-shot methods constitute one of the most successful realizations of
this idea.
Instead of solving state and adjoint systems accurately within each
optimization step, one-shot methods update all variables simultaneously. The framework has proven particularly successful for large-scale
simulation-based optimization problems such as aerodynamic shape
optimization and PDE-constrained optimization problems
\cite{hazra2005,gauger2008,bosse2014}.

Extensions of one-shot methods to optimization problems involving
additional equality constraints were investigated in
\cite{walther2018, walther2018equality}. The method is based on the augmented Lagrangian $\DLag$, which is augmented by state and adjoint residuals, as well as the equality constraint. 
A key result therein is the representation
\begin{equation}
\nabla \DLag = -Ms,
\label{eq:introM}
\end{equation}
where $M$ has a characteristic coupled block structure and $s$ is the update step of the corresponding algorithm.

In many engineering applications, however, optimization problems involve additional inequality constraints of the form
\begin{equation}
k(y,u)\le 0.
\label{eq:ineq}
\end{equation}
Typical examples include stress constraints, positivity requirements, safety margins, and constraints arising from operational restrictions. 

Furthermore, engineering applications often require solving a multi-objective optimization problem, where the so-called constraint methods for scalarization of the multi-objective optimization problem can be employed. These methods find approximations to a set of Pareto optimal solutions that represent different trade-offs between the competing objective functions. The one-shot method has been used in \cite{kusch2018} to solve multi-objective optimization problems using the equality constraint method. However, for non-convex solutions sets, the equality constraint method cannot guarantee to find Pareto optimal solutions. Instead, we need to use methods based on inequality constraints, e.g., the epsilon-constraint method \cite{marglin1967}. 

The incorporation of inequality constraints into one-shot methods is nontrivial because active-set transitions induce nonsmoothness. The objective of this work is therefore to extend the doubly augmented
one-shot framework to optimization problems with additional inequality
constraints while preserving the characteristic coupled one-shot
structure. The key idea consists in introducing slack variables and subsequently
eliminating them analytically.
The resulting formulation naturally introduces generalized derivatives
and semismooth active-set structures.

\section{Optimization Problem}

We consider optimization problems of the discretized form
\begin{equation}
\begin{aligned}
\min_{y,u}\quad & f(y,u)
\\
\text{s.t.}\quad &
c(y,u)=0,
\\
&
k(y,u)\le 0.
\end{aligned}
\label{eq:problem}
\end{equation}

Here, $y\in \mathcal{Y} \subset \mathbb{R}^m$ is the vector of state variables and $u \in \mathcal{U} \subset \mathbb{R}^n$ is the vector of design variables. The discretized state equation is given by the equality constraints $c(y,u)=0$ with $c:\mathcal{Y}\times\mathcal{U} \rightarrow \mathbb{R}^m.$ The inequality constraints are given by $k: \mathcal{Y}\times\mathcal{U} \rightarrow \mathbb{R}^r$, where we assume that $r \leq m.$ 

We assume that the state equation admits a fixed-point formulation
\begin{equation}
y=G(y,u),
\label{eq:fixedpoint}
\end{equation}
with contractive state iteration satisfying
\[
\|G_y(y,u)\|\le \rho <1.
\]

In the following, we assume sufficient differentiability of all functions that are not considered in a separate analysis for non-smoothness. 

\section{Doubly Augmented Lagrangian Formulation}

The subsequent method and analysis are based on a double augmented Lagrangian formulation augments the standard Lagrangian of problem \eqref{eq:problem} by additional terms for the state and adjoint residuals, as well as the inequality constraints. The inequality constraints are introduced with the help of slack variables. 

\subsection{Slack Variable Formulation}

To handle the inequality constraints, slack variables
\[
l\ge 0
\]
are introduced such that
\begin{equation}
k(y,u)+l=0.
\end{equation}

The Lagrangian $\Lag(y,u,\lambda,\mu,l)$ of problem \ref{eq:problem} is given by
\begin{align}
    \Lag(y,u,\lambda,\mu,l) = f(y,u)
+ \lambda^\top(G(y,u)-y)
+ \mu^\top(k(y,u)+l)
\end{align}
with multipliers $\lambda \in \mathbb{R}^m$ and $\mu \in \mathbb{R}^r.$ 

We only consider the variables $ (y,u,\lambda,\mu)$. The first-order optimality conditions for this set of variables are given by 
\begin{subequations}\label{eq:fo_l}
\begin{align}
\nabla_y \Lag &= 0, \label{eq:fo_l_y}\\
\nabla_u \Lag &= 0, \label{eq:fo_l_u}\\
\nabla_\lambda \Lag &= G(y,u)-y = 0, \label{eq:fo_l_lambda}\\
\nabla_\mu \Lag &= k(y,u)+l = 0. \label{eq:fo_l_mu}
\end{align}
\end{subequations}

Let us define the shifted Lagrangian as
\begin{align*}
    \mathcal{N}(y,u,\lambda, \mu) = f(y,u)+\lambda ^T G(y,u)+\mu^Tk(y,u).
\end{align*}
Then the necessary conditions can be expressed as
\begin{subequations}\label{eq:fo_shifted}
\begin{align}
\nabla_y \mathcal{N}(y,u,\lambda,\mu) - \lambda &= 0, \label{eq:fo_shifted_y}\\
\nabla_u \mathcal{N}(y,u,\lambda,\mu) &= 0, \label{eq:fo_shifted_u}\\
G(y,u) - y &= 0, \label{eq:fo_shifted_state}\\
k(y,u) + l &= 0. \label{eq:fo_shifted_ineq}
\end{align}
\end{subequations}

The associated doubly augmented Lagrangian is defined by
\begin{align}
\label{eq:dal}
\DLag(y,u,\lambda,\mu,l)
&=
\Lag(y,u,\lambda,\mu,l)\\
&+
\frac{\alpha}{2}\|G(y,u)-y\|^2 +
\frac{\alpha}{2}\|k(y,u)+l\|^2 +
\frac{\beta}{2}
\|
\nabla_y\Lag(y,u,\lambda,\mu)
\|^2. \nonumber
\end{align}

For the augmented part, $\alpha >0$ is the associated multiplier of the state residual \eqref{eq:fo_l_lambda} and the slack-based constraint \eqref{eq:fo_l_mu} combined, and $\beta >0$ is the associated multiplier of the adjoint residual \eqref{eq:fo_l_y}. 

\subsection{Slack Elimination}

The first-order optimality condition of the doubly augmented Lagrangian \eqref{eq:dal} with respect to $l$ gives the KKT conditions

\begin{equation}
l \ge 0, ~\mu + \alpha (k+l) \ge 0, ~l_i\bigl(\mu_i+\alpha(k_i+l_i)\bigr)=0,
\quad i=1,\ldots,r.
\label{eq:slack_complementarity}
\end{equation}

From these we obtain
\begin{equation}
l = \max\left( 0, -\frac{\mu}{\alpha}-k \right).
\label{eq:slack}
\end{equation}

\begin{proposition}
\label{prop1}
Substituting \eqref{eq:slack} into the constraint residual yields
\[
k+l
=
\frac1\alpha
\left(
\max(0,\alpha k+\mu)-\mu
\right).
\]

\end{proposition}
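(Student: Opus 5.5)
The identity is a componentwise algebraic fact, so the plan is to add $k$ to the slack formula \eqref{eq:slack} and pull a common factor out of the maximum. Everything is read componentwise for $i=1,\ldots,r$. Throughout we use $\alpha>0$ and the elementary identity $\max(a,b)+c=\max(a+c,b+c)$.

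The first step is to compute directly from \eqref{eq:slack}:
\[
k+l = k+\max\Bigl(0,-\tfrac{\mu}{\alpha}-k\Bigr) = \max\Bigl(k,-\tfrac{\mu}{\alpha}\Bigr).
\]
The second step rewrites this maximum with the shift $-\mu/\alpha$ taken outside:
\[
\max\Bigl(k,-\tfrac{\mu}{\alpha}\Bigr) = \max\Bigl(k+\tfrac{\mu}{\alpha},0\Bigr)-\tfrac{\mu}{\alpha}.
\]
The third step uses positive homogeneity of $\max(\cdot,0)$, valid because $\alpha>0$:
\[
\max\Bigl(k+\tfrac{\mu}{\alpha},0\Bigr) = \tfrac1\alpha\max(\alpha k+\mu,0).
\]
Combining the three steps gives $k+l=\frac1\alpha\bigl(\max(0,\alpha k+\mu)-\mu\bigr)$, which is the claim.

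As a cross-check, and to tie the result to the KKT system \eqref{eq:slack_complementarity}, we would also verify the two cases directly. If $\alpha k_i+\mu_i\ge 0$, then $l_i=0$ and both sides equal $k_i$. If $\alpha k_i+\mu_i<0$, then $l_i=-\mu_i/\alpha-k_i>0$ and both sides equal $-\mu_i/\alpha$.

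No step is a genuine obstacle. The only point that needs care is the positivity of $\alpha$ in the third step. If one wanted more, one would also confirm that \eqref{eq:slack} really solves \eqref{eq:slack_complementarity}. That holds because the problem in $l_i$ is the minimization of a strictly convex quadratic over $l_i\ge 0$, whose projected minimizer is exactly $\max(0,-\mu_i/\alpha-k_i)$. This is assumed from the preceding text.
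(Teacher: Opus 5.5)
Your proof is correct and takes essentially the same route as the paper. The paper first multiplies by $\alpha>0$ and then applies $x+\max(0,-x)=\max(0,x)$ with $x=\alpha k+\mu$; this is the same translation invariance of the maximum plus positive homogeneity that you use, only in the opposite order, and your componentwise case check is a valid but unnecessary extra confirmation.
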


\begin{proof}

From \eqref{eq:slack}
we obtain
\begin{align*}
    k+l=k+\max\left(0,-\frac{\mu}{\alpha}-k\right).
\end{align*}

Multiplying by $\alpha>0$ gives
\begin{align*}
\alpha(k+l) = \alpha k + \max(0,-\mu-\alpha k).
\end{align*}

Using the identity
$x+\max(0,-x)=\max(0,x)$,
with $x=\alpha k+\mu$, and dividing by $\alpha$ yields the result.

\end{proof}

We can use this elimination to reformulate the augmented Lagrangian.

\begin{proposition}

The inequality contribution to the augmented Lagrangian reduces to
\begin{equation}
\frac1{2\alpha}
\|
\max(0,\alpha k+\mu)
\|^2
-
\frac1{2\alpha}\|\mu\|^2.
\label{eq:maxterm}
\end{equation}

\end{proposition}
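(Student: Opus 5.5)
The inequality contribution to $\DLag$ in \eqref{eq:dal} consists of the two terms $\mu^\top(k+l)+\frac{\alpha}{2}\|k+l\|^2$. The plan is to substitute the closed-form residual from Proposition~\ref{prop1} into both terms and simplify. The computation is componentwise, so I will work with a fixed index $i$ and write $k_i, l_i, \mu_i$. To lighten notation I set $z_i := \max(0,\alpha k_i+\mu_i)$. Proposition~\ref{prop1} then gives $k_i+l_i = \frac{1}{\alpha}(z_i-\mu_i)$.

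Inserting this expression, the $i$-th contribution becomes
\[
\mu_i\,\frac{z_i-\mu_i}{\alpha} + \frac{\alpha}{2}\,\frac{(z_i-\mu_i)^2}{\alpha^2}
= \frac{1}{2\alpha}\Bigl(2\mu_i z_i - 2\mu_i^2 + z_i^2 - 2\mu_i z_i + \mu_i^2\Bigr)
= \frac{1}{2\alpha}\bigl(z_i^2-\mu_i^2\bigr).
\]
The cross terms $\pm 2\mu_i z_i$ cancel exactly. This cancellation is the whole point of the computation: the linear multiplier term and the quadratic penalty combine into a difference of squares. Summing over $i=1,\ldots,r$ then yields \eqref{eq:maxterm}, namely $\frac{1}{2\alpha}\|\max(0,\alpha k+\mu)\|^2-\frac{1}{2\alpha}\|\mu\|^2$.

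There is no genuine obstacle here; the argument is pure algebra once Proposition~\ref{prop1} is available. The only point requiring care is to state clearly what "inequality contribution" means. It consists of exactly the $\mu^\top(k+l)$ term from $\Lag$ together with the $\frac{\alpha}{2}\|k+l\|^2$ penalty. The $\beta$-term must be excluded, because it involves $\nabla_y\Lag$, which does not depend on $l$. It is also worth remarking that $l$ has been replaced by its minimizer \eqref{eq:slack}. As a sanity check, I would verify the two cases directly:
\begin{itemize}[label={}]
\item If $\alpha k_i+\mu_i\le 0$, then $z_i=0$ and the contribution is $-\mu_i^2/(2\alpha)$.
\item Otherwise $l_i=0$ and the contribution is $\mu_i k_i+\frac{\alpha}{2}k_i^2 = \frac{1}{2\alpha}\bigl((\alpha k_i+\mu_i)^2-\mu_i^2\bigr)$.
\end{itemize}
Both cases agree with the formula.
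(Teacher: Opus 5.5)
Your proof is correct and matches the paper's argument. Like the paper, you substitute $k+l=\frac{1}{\alpha}(z-\mu)$ from Proposition~\ref{prop1} into $\mu^\top(k+l)+\frac{\alpha}{2}\|k+l\|^2$, expand, and let the $\pm 2\mu^\top z$ cross terms cancel; working componentwise rather than in vector form changes nothing, and your two-case sanity check is a valid optional confirmation.
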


\begin{proof}

Let
\[
z:=\max(0,\alpha k+\mu).
\]

By Proposition \ref{prop1},
\[ k+l = \frac{1}{\alpha}(z-\mu). \]

Substituting this into the inequality contribution gives
\begin{align*}
\mu^{\top}(k+l) + \frac{\alpha}{2}\|k+l\|^2
&= \frac{1}{\alpha}\mu^{\top}(z-\mu) + \frac{1}{2\alpha}\|z-\mu\|^2.
\end{align*}

Expanding the square gives
\begin{align*}
\mu^{\top}(k+l) + \frac{\alpha}{2}\|k+l\|^2
&=  \frac{1}{\alpha}\mu^{\top}z -\frac{1}{\alpha}\|\mu\|^2
\\
&\quad +\frac{1}{2\alpha} \left(
\|z\|^2 -2\mu^{\top}z +\|\mu\|^2 \right).\\
& = \frac{1}{2\alpha}\|z\|^2 -\frac{1}{2\alpha}\|\mu\|^2.
\end{align*}

Substituting $z$ as defined above completes the proof.

\end{proof}

This contribution coincides with the classical
augmented-Lagrangian term for inequality
constraints \cite{rockafellar1973multiplier}.

\section{One-Shot Gradient Representation}

We will now show the specific structure of the gradient of the new doubly augmented Lagrangian.

Referring back to the first-order optimality conditions \eqref{eq:fo_shifted} and using Proposition \ref{prop1}, we can define an iterative procedure with an update step.

The one-shot step is then defined by
\begin{align}
    \label{eq:os_step}
    s=
\begin{pmatrix}
\Delta y 
\\
\Delta u
\\
\Delta \lambda
\\
\Delta \mu
\end{pmatrix}
=
\begin{pmatrix}
G(y,u)-y 
\\
\nabla_u \mathcal{N}(y,u,\lambda, \mu)
\\
\nabla_y \mathcal{N}(y,u,\lambda, \mu)-\lambda
\\
\frac{1}{\alpha}(\max(0,\alpha k+\mu)-\mu)
\end{pmatrix}.
\end{align}
To show the specific structure of the doubly augmented Lagrangian, we need to generalize the Jacobian due to the maximum function. 

\subsection{Generalized Active-Set Derivatives}
\label{sec:generalized}

Define
\[
z:=\alpha k+\mu.
\]

The max mapping
\[
\phi(z)=\max(0,z)
\]
is locally Lipschitz continuous and therefore directionally differentiable and semismooth in the sense of Clarke \cite{clarke1983}.

The Clarke generalized Jacobian $D(z)\in \partial \phi(z)$ is then given by $D(z)=\text{diag}(d_i)$ with 
\[
d_i \in
\begin{cases}
\{0\}, & z_i<0,
\\
[0,1], & z_i=0,
\\
\{1\}, & z_i>0.
\end{cases}
\]

For every $z$ and every selection $D(z)\in\partial\phi(z)$,
the componentwise identity 
\begin{align}
\label{eq:iden}
    \phi(z)=D(z)\,z
\end{align}
holds. This can be directly observed from the properties of the maximum function, so that the mapping $\phi(z)$ is piecewise linear. Indeed, for $z_i>0$ one has $d_i=1$ and
$\phi(z_i)=z_i$, whereas for $z_i<0$ one has
$d_i=0$ and $\phi(z_i)=0$. At points $z_i=0$,
both sides vanish for every $d_i\in[0,1]$.


\subsection{Matrix Structure}

In the following, we employ a semismooth Newton framework in which a single representative
$D(z)\in\partial \phi(z)$ is selected at each iterate and held fixed in the
linearization.

Differentiation of the transformed augmented Lagrangian yields the coupled system
\[
\nabla \DLag = -M(D(z))s,
\]
with
\[
M(D(z))=
\begin{pmatrix}
M_{yy} & M_{yu} & M_{y\lambda} & M_{y\mu}
\\
M_{uy} & M_{uu} & M_{u\lambda} & M_{u\mu}
\\
M_{\lambda y} & M_{\lambda u} & M_{\lambda\lambda} & 0
\\
M_{\mu y} & M_{\mu u} & 0 & M_{\mu\mu}
\end{pmatrix}.
\]

For the sake of simplicity, we omit the definition of the standard blocks and refer to the analogous work in \cite{walther2018equality}.

The inequality-dependent blocks are given by
\begin{align}
M_{\mu y}
&=
-D(z)k_y,
\\
M_{\mu u}
&=
-D(z)k_u,
\\
M_{\mu\mu}
&=
\frac1\alpha(I-D(z)).
\end{align}

\begin{theorem}
The gradient of the doubly augmented Lagrangian satisfies
\begin{equation}
\nabla \DLag = -M(D(z))s,
\label{eq:mainrepresentation}
\end{equation}
where the matrix $M$ preserves the structural block form of the equality-constrained framework and incorporates nonsmooth active-set effects through the generalized Jacobian $D(z)$.
\end{theorem}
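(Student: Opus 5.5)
We compute $\nabla\DLag$ after slack elimination and compare it block by block with $-M(D(z))s$, using the step $s$ from \eqref{eq:os_step}. By the second proposition, the transformed doubly augmented Lagrangian is
\[
\DLag = f + \lambda^\top(G-y) + \frac{\alpha}{2}\|G-y\|^2 + \frac{1}{2\alpha}\|\max(0,\alpha k+\mu)\|^2 - \frac{1}{2\alpha}\|\mu\|^2 + \frac{\beta}{2}\|\nabla_y\mathcal N - \lambda\|^2 .
\]
The only nonsmooth term is $\frac1{2\alpha}\|\phi(z)\|^2$ with $z=\alpha k+\mu$. Since $\frac12 t_+^2$ is continuously differentiable with derivative $t_+$, this term is in fact $C^1$. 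Its gradient with respect to $(y,u,\mu)$ is
\[
\bigl(k_y^\top\phi(z),\; k_u^\top\phi(z),\; \tfrac1\alpha\phi(z)\bigr).
\]
The generalized Jacobian $D(z)$ enters only through the identity \eqref{eq:iden}, $\phi(z)=D(z)z$, for a fixed selection.

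First we rewrite the inequality contributions in terms of the last step component $\Delta\mu=\frac1\alpha(\phi(z)-\mu)$, that is, $\phi(z)=\mu+\alpha\Delta\mu$. The $\mu$-gradient then becomes
\[
\nabla_\mu\DLag = \tfrac1\alpha\phi(z) - \tfrac1\alpha\mu + \beta\, k_y\,(\nabla_y\mathcal N-\lambda) = \Delta\mu + \beta k_y\Delta\lambda .
\]
The goal is to match this with $-(M_{\mu y}\Delta y+M_{\mu u}\Delta u+M_{\mu\mu}\Delta\mu)$, and here the selection $D$ must be used. By \eqref{eq:iden} we have $\Delta\mu=\frac1\alpha(D z-\mu)=Dk-\frac1\alpha(I-D)\mu$. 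So $\Delta\mu$ splits into an active part $Dk$ and an inactive part $\frac1\alpha(I-D)\mu$. The plan is to linearize the active part $Dk$ along the step, writing $k\approx k_y\Delta y+k_u\Delta u$ at feasible-state points as in the equality-constrained analysis. This produces the blocks $-Dk_y$, $-Dk_u$ and $\frac1\alpha(I-D)$ up to sign conventions.

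Next we treat the $y$- and $u$-gradients. They consist of the standard terms from \cite{walther2018equality}, namely $f_y+\lambda^\top(G_y-I)+\alpha(G_y-I)^\top(G-y)+\beta\,\nabla_{yy}\mathcal N(\nabla_y\mathcal N-\lambda)$ and the $u$-analogue. To these we add $k_y^\top\phi(z)$ and $k_u^\top\phi(z)$ respectively. Using $\phi(z)=\mu+\alpha\Delta\mu$, the $\mu$-part $k_y^\top\mu$ merges into $\nabla_y\mathcal N$, and hence into $\Delta\lambda$ and $\Delta u$ exactly as the multiplier terms do in the equality case. The remaining $\alpha k_y^\top\Delta\mu$ defines the column blocks $M_{y\mu}$ and $M_{u\mu}$. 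The $\lambda$-gradient involves neither $k$ nor $\phi$, apart from $\mu$ entering $\nabla_y\mathcal N$. It therefore keeps the zero $(\lambda,\mu)$ block and the standard $M_{\lambda y}, M_{\lambda u}, M_{\lambda\lambda}$ entries from the cited work. Collecting all four rows gives the stated block form, which proves the structural claim.

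The main obstacle is the $\mu$-row. The exact gradient $\nabla_\mu\DLag$ is linear in $\Delta\mu$ and $\Delta\lambda$, whereas the claimed row has a zero $(\mu,\lambda)$ block and involves $\Delta y,\Delta u$ through $D k_y$, $Dk_u$. Reconciling the two requires care. First, one must choose the ordering and sign convention of rows and columns consistently with \cite{walther2018equality}, where $M$ is typically arranged so that the $\lambda$-gradient pairs with $\Delta y$ and so on. Second, one must absorb the $\beta k_y\Delta\lambda$ term into the $y$-column via the adjoint structure. I would first try to reproduce the equality-case derivation with $D=I$ in the active components, checking that the formula reduces to the known equality-constrained $M$. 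Then I would verify that on the inactive components ($d_i=0$) the row collapses to $\frac1\alpha\Delta\mu_i=-\frac1\alpha\mu_i$. If an exact identity fails, the statement should be read in the semismooth Newton sense of the preceding paragraph: $M(D)$ is the representative obtained by holding $D$ fixed and identifying $k$ with its linearization along $s$.
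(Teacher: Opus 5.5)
\textbf{The gap is the $\mu$-row, and none of your proposed bridges can close it.} Your differentiation is sound, and the $y$-, $u$- and $\lambda$-rows you obtain are exact.

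\begin{itemize}
\item Reordering rows and columns only permutes blocks. It cannot make $\Delta y$ appear in $\nabla_\mu\DLag$, whose exact value you computed as $\Delta\mu+\beta k_y\Delta\lambda$.
\item $\Delta y$ and $\Delta\lambda$ are independent components of $s$, so $\beta k_y\Delta\lambda$ cannot be ``absorbed'' into the $y$-column.
\item The approximation $k\approx k_y\Delta y+k_u\Delta u$ has no basis. Taylor gives $k(y+\Delta y,u+\Delta u)-k(y,u)\approx k_y\Delta y+k_u\Delta u$, not $k(y,u)$. Even granting it, the inactive part $-\tfrac1\alpha(I-D)\mu$ multiplies $\mu$, not $\Delta\mu$.
\end{itemize}

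Both sanity checks you plan would fail. For $D=I$ the claimed row gives $k_y\Delta y+k_u\Delta u$, but the exact value is $k+\beta k_y\Delta\lambda$. For $d_i=0$ it gives $-\tfrac1\alpha\Delta\mu_i=\mu_i/\alpha^2$, but the exact value is $-\mu_i/\alpha+\beta(k_y\Delta\lambda)_i$.

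In fact no argument can close the gap. Take $m=n=r=1$, $G\equiv0$, $f=\tfrac12u^2$, $k=u$, $\alpha=2$, at $(y,u,\lambda,\mu)=(0,1,0,-1)$. Then $s=(0,0,0,1)$, $D=1$ and $\nabla_\mu\DLag=1$. The stated blocks $M_{\mu y}=-Dk_y$, $M_{\mu u}=-Dk_u$, $M_{\mu\lambda}=0$, $M_{\mu\mu}=\tfrac1\alpha(I-D)$ give $0$ instead.

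What your computation actually proves is the exact identity $\nabla\DLag=-Ms$ with
\[
M=\begin{pmatrix}
\alpha(I-G_y)^\top & 0 & -(I+\beta\mathcal N_{yy}) & -\alpha k_y^\top\\
-\alpha G_u^\top & -I & -\beta\mathcal N_{uy} & -\alpha k_u^\top\\
-I & 0 & \beta(I-G_y) & 0\\
0 & 0 & -\beta k_y & -I
\end{pmatrix}.
\]
This $M$ does not depend on $D$ at all; the nonsmoothness sits entirely in $\Delta\mu=\tfrac1\alpha(\phi(z)-\mu)$.

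\textbf{The paper's proof takes the same route and does not supply the missing step either.} It differentiates directly, uses $\phi(z)=D(z)z$, and finds the $\mu$-gradient of the inequality term to be $Dk+\tfrac1\alpha(D-I)\mu$, which is just $\Delta\mu$. It then asserts the block system after ``collecting all contributions''. Its $y$- and $u$-gradients also carry spurious terms $\beta N_{yu}\nabla_u\Lag$ and $\beta N_{uu}\nabla_u\Lag$, which your version correctly omits, since only $\nabla_y\Lag$ is penalized.

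The $\mu$-blocks the paper states are exactly $-\partial_{(y,u,\lambda,\mu)}\bigl(Dk+\tfrac1\alpha(D-I)\mu\bigr)$ with $D$ frozen. That is the negative generalized Jacobian of the $\mu$-update, the object that enters a semismooth Newton linearization. It is not a factor in $\nabla\DLag=-Ms$. So the paper silently adopts the fallback reading in your last sentence.

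To turn your proposal into a correct proof, do one of two things. Either state and prove the exact $D$-free representation above. Or say explicitly that $M_{\mu\cdot}$ is the frozen-$D$ Jacobian of $\Delta\mu$, and drop the claim that it satisfies $\nabla\DLag=-Ms$ exactly.
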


\begin{proof}
Throughout the derivation we use $z=\alpha k(y,u)+\mu$.
After elimination of the slack variables, the transformed augmented
Lagrangian becomes
\begin{align}
\label{eq:prooflag}
\DLag(y,u,\lambda,\mu)
&=
f(y,u) + \lambda^\top(G(y,u)-y) + \frac{\alpha}{2}\|G(y,u)-y\|^2
\\
&\quad
+ \frac{1}{2\alpha} \| \phi(z) \|^2
- \frac{1}{2\alpha}\|\mu\|^2
+ \frac{\beta}{2} \| \nabla_y\Lag(y,u,\lambda,\mu) \|^2.
\nonumber
\end{align}

Differentiation with respect to the state variables yields
\begin{align*}
\nabla_y \DLag
&=
f_y
+
(G_y-I)^\top\lambda
\\
&\quad
+
\alpha(G_y-I)^\top(G-y)
\\
&\quad
+
k_y^\top D(z)\,z
\\
&\quad
+
\beta N_{yy}\nabla_y\Lag
+
\beta N_{yu}\nabla_u\Lag.
\end{align*}

Similarly, differentiation with respect to the design variables yields
\begin{align*}
\nabla_u \DLag
&=
f_u
+
G_u^\top\lambda
\\
&\quad
+
\alpha G_u^\top(G-y)
\\
&\quad
+
k_u^\top D(z)\,z
\\
&\quad
+
\beta N_{uy}\nabla_y\Lag
+
\beta N_{uu}\nabla_u\Lag.
\end{align*}

Using semismooth calculus, we apply the generalized chain rule and select
$D(z)\in \partial \phi(z)$, which is held fixed at the current iterate.

The directional derivative of the inequality term with frozen active set selection yields contributions of the form
\[
k_y^\top D(z)\,z, \qquad k_u^\top D(z)\,z.
\]
Using $\phi(z)=D(z)z$ and $D(z)^2=D(z)$,
we obtain
\[
\nabla_\mu
\left(
\frac1{2\alpha}\|\phi(z)\|^2
-\frac1{2\alpha}\|\mu\|^2
\right)
=
D(z)k
+\frac1\alpha(D(z)-I)\mu
\]
as a contribution to $\nabla_\mu \DLag. $

Collecting all contributions and introducing the one-shot step $s$ given by \eqref{eq:os_step}
yields the coupled block system
\[
\nabla \DLag = -M(D(z))s.
\]

All remaining blocks coincide structurally with the
equality-constrained framework of \cite{walther2018equality}.

\end{proof}

\section{Second-Order Subdifferential Analysis}

The transformed doubly augmented Lagrangian belongs to $C^1$, but is
not twice differentiable at points satisfying
\[
\alpha k+\mu=0.
\]

To analyze local optimality and stability properties of the transformed augmented Lagrangian, generalized second-order subdifferentials in the sense of Clarke \cite{clarke1983} and Rockafellar--Wets \cite{rockafellar2009} are employed.

We use the definitions of $z$ and $\phi(z)$ as introduced in Chapter 4. 


For every admissible generalized derivative selection
$D(z)\in\partial\phi(z)$, we associate a representative matrix
$H(D)$ obtained from the semismooth linearization of the transformed augmented Lagrangian.

Thus, each element of the generalized Hessian corresponds to a fixed active-set selection $D$. 

As a result, for every admissible generalized derivative selection
$D(z)\in\partial\phi(z)$ we associate a representative
generalized Hessian matrix
\[
H(D).
\]

We aim to show that for a fixed selection $D$, the corresponding Hessian $H(D)$ is positive definite. 

We closely follow the idea of \cite{walther2018equality}. The generalized Hessian is decomposed into
\[
\partial^2\DLag
=
\tilde H_1+\tilde H_2(D)+\tilde H_3(D).
\]
The matrix $\tilde H_1$ contains the dominant second-order terms inherited from the equality-constrained one-shot framework together with the multiplier block induced by the inequality constraints. The matrix $\tilde H_2(D)$ collects all active-set dependent contributions generated by the semismooth linearization. Finally, $\tilde H_3$ contains higher-order coupling terms originating from nonlinearities of $G$, $k$, and the adjoint residual. These terms coincide structurally with the perturbation terms appearing in the equality-constrained analysis.

In the following, we show that $\tilde H_2$ is positive semidefinite, then show that $\tilde H_1+\tilde H_2$ is positive definite and finally verify that $\tilde H_3$ does not affect coercivity. We only analyze the structure of the new nonsmooth contribution, which is the contribution of the terms \eqref{eq:maxterm}, i.e., 
\begin{align} 
\Phi(y,u,\mu) = \frac1{2\alpha} \| \phi(z) \|^2 - \frac1{2\alpha}\|\mu\|^2 
\end{align}. 
We split the corresponding Hessian contributions into two quadratic parts and a remainder, such that
$\Phi'' = \Phi''_{1}+\Phi''_{\mathrm{2}}(D)+\Phi''_{\mathrm{3}}(D).$

\subsection{Structure of the active-set dependent contributions}

The decomposition of $\Phi''$ induces the corresponding decomposition of the inequality-dependent part of the generalized Hessian. Accordingly, the active-set contribution of the full generalized Hessian is identified with
\[
\tilde H_2(D):=\Phi''_2(D).
\]

\begin{proposition}
\label{prophtwo}
    The active-set dependent contribution 
    \begin{align}
        \tilde H_2(D) = \Phi''_{2} = \begin{pmatrix} \alpha k_y^\top D k_y & \alpha k_y^\top D k_u & k_y^\top D \\[1ex] \alpha k_u^\top D k_y & \alpha k_u^\top D k_u & k_u^\top D \\[1ex] D k_y & D k_u & \frac1\alpha D \end{pmatrix}
    \end{align} 
    generated by the inequality term is positive semidefinite for every admissible generalized derivative selection $D(z)\in\partial\phi(z)$.
\end{proposition}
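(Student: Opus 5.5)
The plan is to write $\tilde H_2(D)$ as a congruence transform of the diagonal matrix $D$. Then positive semidefiniteness reduces to the nonnegativity of the diagonal entries $d_i\in[0,1]$, which holds for every selection in the Clarke generalized Jacobian described in Section~\ref{sec:generalized}. Concretely, I would introduce the $r\times(m+n+r)$ matrix
\[
B:=\begin{pmatrix} \sqrt{\alpha}\,k_y & \sqrt{\alpha}\,k_u & \tfrac{1}{\sqrt{\alpha}} I \end{pmatrix},
\]
which is well defined because $\alpha>0$. I would then check block by block that
\[
\tilde H_2(D)=B^\top D\, B .
\]
For instance, the $(1,1)$ block is $\sqrt\alpha k_y^\top D \sqrt\alpha k_y=\alpha k_y^\top D k_y$. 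The $(1,3)$ block is $\sqrt\alpha k_y^\top D\frac{1}{\sqrt\alpha}=k_y^\top D$, and its transpose $Dk_y$ gives the $(3,1)$ block because $D$ is diagonal and hence symmetric. The $(3,3)$ block is $\frac1\alpha D$. The remaining blocks are analogous.

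With this factorization, for any direction $w=(w_y,w_u,w_\mu)$ I would compute
\[
w^\top \tilde H_2(D)\,w=(Bw)^\top D\,(Bw)=\sum_{i=1}^r d_i\,(Bw)_i^2 ,
\qquad (Bw)=\sqrt{\alpha}\,k_y w_y+\sqrt{\alpha}\,k_u w_u+\tfrac{1}{\sqrt\alpha}w_\mu .
\]
Since every admissible selection has $d_i\in\{0\}$, $[0,1]$, or $\{1\}$ according to the sign of $z_i$, all $d_i\ge 0$. Hence the sum is nonnegative for every $w$, which is the claim. The argument would also identify the kernel, namely the directions with $(Bw)_i=0$ for all indices with $d_i>0$; this is useful later when combining with $\tilde H_1$.

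I do not expect a genuine analytical obstacle; the main care point is bookkeeping. One must verify that the scaling of the off-diagonal blocks, $k_y^\top D$ rather than $\alpha k_y^\top D$, is exactly what the factorization with $\sqrt\alpha$ and $1/\sqrt\alpha$ produces. One must also note that $D$ is symmetric, so the matrix is symmetric and the quadratic-form argument applies. As an optional remark, $D^2\ne D$ for $d_i\in(0,1)$, so one should not try to write $\tilde H_2=(DB)^\top(DB)$. The congruence $B^\top D B$ with $D\succeq 0$ avoids this, since $D=D^{1/2}D^{1/2}$ gives $\tilde H_2=(D^{1/2}B)^\top(D^{1/2}B)$.
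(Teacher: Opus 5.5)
Your proposal is correct and is essentially the paper's proof. The paper writes $\tilde H_2(D)=R^\top R$ with $R=\bigl(\sqrt{\alpha}\,D^{1/2}k_y \;\; \sqrt{\alpha}\,D^{1/2}k_u \;\; \tfrac{1}{\sqrt{\alpha}}D^{1/2}\bigr)=D^{1/2}B$, so your congruence $B^\top D B$ is the same factorization, and both arguments rest only on $d_i\ge 0$ for every admissible Clarke selection.
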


\begin{proof}
    Introducing \[ R= \begin{pmatrix} \sqrt{\alpha}\,D^{1/2}k_y & \sqrt{\alpha}\,D^{1/2}k_u & \frac1{\sqrt{\alpha}}D^{1/2} \end{pmatrix}, \] one obtains the factorization \[ \Phi''_{2} = R^\top R. \]

    Since $D(z)$ is diagonal with entries satisfying $ 0\le d_i\le 1, $ it follows that $D(z)\succeq0$ and therefore \[ \Phi''_{2} \succeq0. \]
\end{proof}

\subsection{Positive definiteness of the quadratic contributions}

We now consider the two quadratic contributions $\tilde{H}_1$ and $\tilde{H}_2(D)$.

The contribution
\[
\Phi''_1
=
\begin{pmatrix}
0&0&0\\
0&0&0\\
0&0&-\frac1\alpha I
\end{pmatrix}
\]
coincides structurally with the multiplier contribution arising in the
equality-constrained framework of \cite{walther2018equality}. Hence it
is incorporated into the dominant part $\tilde H_1$ of the generalized
Hessian.

Analogously to \cite{walther2018equality}, we define

\[
\tilde H_1=
\begin{pmatrix}
H_{11} & 0
\\
0 &
\beta k_y k_y^\top-\frac1\alpha I
\end{pmatrix},
\]

where $H_{11}$ denotes the dominant block associated with the state, design and adjoint variables.

\begin{proposition}
\label{prop:H1}
Assume
\begin{align}
    \alpha\beta \Delta G_y^\top \Delta G_y \succ I+\beta N_{yy},
\end{align}
and
\begin{align}
    \alpha\beta k_yk_y^\top \succ I
\end{align}
Then
\[
\tilde H_1\succ0.
\]
\end{proposition}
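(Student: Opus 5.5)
The block-diagonal structure of $\tilde H_1$ reduces the statement to two separate positivity checks. A symmetric block-diagonal matrix is positive definite if and only if each diagonal block is, so I will show $H_{11}\succ0$ and $\beta k_y k_y^\top-\frac1\alpha I\succ0$ independently, and then combine them. Concretely, for $v=(v_1,v_2)\neq0$ the off-diagonal blocks vanish, so
\[
v^\top\tilde H_1 v = v_1^\top H_{11}v_1 + v_2^\top\bigl(\beta k_yk_y^\top-\tfrac1\alpha I\bigr)v_2 ,
\]
and at least one of the two terms is strictly positive while the other is nonnegative.

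\textbf{The multiplier block.} This block is the new contribution, coming from $\Phi''_1$ and the $\beta$-term of the adjoint residual through $\nabla_y\Lag$, which depends on $\mu$ via $k_y^\top\mu$. The second assumption gives it directly. Multiplying $\alpha\beta k_yk_y^\top\succ I$ by $1/\alpha>0$ preserves the Loewner order and yields $\beta k_yk_y^\top-\frac1\alpha I\succ0$. I would note that this assumption implicitly forces $k_yk_y^\top\in\mathbb{R}^{r\times r}$ to be nonsingular, so $k_y$ must have full row rank $r$. This is consistent with the standing assumption $r\le m$ and should be stated as a constraint-qualification-type requirement.

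\textbf{The block $H_{11}$.} Here I would invoke the corresponding result of \cite{walther2018equality}. As in that work, $H_{11}$ is the dominant block of the equality-constrained doubly augmented Lagrangian in the variables $(y,u,\lambda)$, with $\Delta G_y=I-G_y$. I would recall its structure: the $(y,y)$ block $\alpha\Delta G_y^\top\Delta G_y+\beta N_{yy}^2$-type terms, the identity-type coupling $-\Delta G_y^\top$ between $y$ and $\lambda$, and the $\beta I$ block in $\lambda$.

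I would then take a Schur complement with respect to the $\lambda$-block, which is $\beta I\succ0$. Positivity of $H_{11}$ then reduces, in the $y$-direction, to
\[
\alpha\Delta G_y^\top\Delta G_y - \tfrac1\beta\bigl(\Delta G_y^\top\Delta G_y\text{-type coupling}\bigr) - N_{yy}\succ0 .
\]
Multiplied by $\beta$, this is exactly the first assumption $\alpha\beta\Delta G_y^\top\Delta G_y\succ I+\beta N_{yy}$ after the congruence with $\Delta G_y$, which is invertible by contractivity $\|G_y\|\le\rho<1$. The $u$-directions are handled as in \cite{walther2018equality} via the design-space preconditioner or reduced-Hessian condition implicit in $H_{11}$.

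\textbf{Main obstacle.} Since the paper only refers to \cite{walther2018equality} for $H_{11}$, the main difficulty is reproducing the exact Schur-complement computation so that the inequality matches the first hypothesis precisely. If the terms do not line up exactly, I would fall back on citing the equality-constrained theorem verbatim, stating that its hypothesis is the first assumption. The conclusion $\tilde H_1\succ0$ then follows from the block-diagonal decomposition above.
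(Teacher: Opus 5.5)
Your proposal is correct and follows the paper's proof: you use the block-diagonal structure of $\tilde H_1$, take $H_{11}\succ0$ from \cite{walther2018equality}, and rescale $\alpha\beta k_yk_y^\top\succ I$ by $1/\alpha>0$ to get $\beta k_yk_y^\top-\frac1\alpha I\succ0$. Your extra Schur-complement sketch for $H_{11}$ is not needed and is somewhat loose: in the original variables the $\lambda\lambda$ block is $\beta\Delta G_y\Delta G_y^\top$ rather than $\beta I$, and the $y\lambda$ coupling is $-(I+\beta N_{yy})\Delta G_y^\top$, so the Schur correction is $\frac1\beta(I+\beta N_{yy})^2$, which yields the first hypothesis exactly without any further congruence. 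Since you fall back on the citation just as the paper does, nothing is missing.
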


\begin{proof}

The positivity of the block $H_{11}$ follows from the analysis
of \cite{walther2018equality}.

For the multiplier block we obtain
\[
\beta k_yk_y^\top-\frac1\alpha I = \frac1\alpha \Bigl(
\alpha\beta k_yk_y^\top-I \Bigr).
\]

By assumption
\[
\alpha\beta k_yk_y^\top>I.
\]

Hence,
\[
\beta k_yk_y^\top-\frac1\alpha I
\succ0.
\]

Since $\tilde H_1$ is block diagonal, positivity of both diagonal blocks implies

\[
\tilde H_1\succ0.
\]

\end{proof}

The condition
\[
\alpha\beta k_yk_y^\top\succ I
\]
implicitly requires that the active inequality constraint gradients are
linearly independent. In particular, we assume that $k_y$ has full row
rank in a neighborhood of the solution.

We can now summarize these results in the following theorem.

\begin{theorem}
\label{thm:H1H2}

Assume that
\begin{align}
    \alpha\beta \Delta G_y^\top \Delta G_y \succ I+\beta N_{yy},
\end{align}
and
\begin{align}
    \alpha\beta k_yk_y^\top\succ I.
\end{align}

Then, for every admissible generalized derivative selection
$D(z)\in\partial\phi(z),$ the matrix $\tilde H_1+\tilde H_2(D)$ is positive definite.

\end{theorem}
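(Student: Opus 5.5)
The argument combines Proposition~\ref{prop:H1} and Proposition~\ref{prophtwo}, using the elementary fact that the sum of a positive definite and a positive semidefinite symmetric matrix of the same dimension is positive definite.

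\textbf{Step 1.} Under the two stated assumptions, Proposition~\ref{prop:H1} gives $\tilde H_1\succ 0$. This holds independently of $D$, since $\tilde H_1$ contains no active-set dependent terms.

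\textbf{Step 2.} Fix an arbitrary admissible selection $D=D(z)\in\partial\phi(z)$. By Proposition~\ref{prophtwo}, the factorization $\tilde H_2(D)=R^\top R$ gives $\tilde H_2(D)\succeq 0$. This holds for every diagonal $D$ with entries $d_i\in[0,1]$, so it covers all points of the Clarke generalized Jacobian, including the kinks $z_i=0$.

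Before adding the two matrices, one bookkeeping issue must be settled. $\tilde H_2(D)=\Phi_2''$ is stated on the variables $(y,u,\mu)$, while $\tilde H_1$ acts on $(y,u,\lambda,\mu)$. I would therefore embed $\tilde H_2(D)$ into the full space by inserting zero rows and columns at the $\lambda$ positions, i.e.\ replace it by $P^\top \tilde H_2(D) P$, where $P$ is the coordinate projection onto $(y,u,\mu)$. This embedding preserves semidefiniteness, because
\[
v^\top P^\top \tilde H_2(D) P v=(Pv)^\top \tilde H_2(D)(Pv)\ge 0 .
\]
This alignment of block indices between the two propositions is the step I expect to need the most care. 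It is the only place where the argument could fail, for example if the multiplier block of $\tilde H_1$ were indexed inconsistently with the last block of $\Phi_2''$. I would verify that both refer to the $\mu$-component of the linearized augmented Lagrangian.

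\textbf{Step 3.} For any nonzero $v=(v_y,v_u,v_\lambda,v_\mu)$,
\[
v^\top(\tilde H_1+\tilde H_2(D))v = v^\top\tilde H_1 v + v^\top \tilde H_2(D) v \ge v^\top \tilde H_1 v>0 .
\]
Hence $\tilde H_1+\tilde H_2(D)\succ 0$. Because the lower bound $v^\top\tilde H_1 v$ does not depend on $D$, the conclusion holds uniformly over every admissible generalized derivative selection. In particular, it holds with the same coercivity constant, namely $\lambda_{\min}(\tilde H_1)$, which is useful later when $\tilde H_3$ is treated as a perturbation.
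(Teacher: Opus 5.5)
Your proposal is correct and follows the paper's own proof: Proposition~\ref{prop:H1} gives $\tilde H_1\succ 0$, Proposition~\ref{prophtwo} gives $\tilde H_2(D)\succeq 0$, and the sum is bounded below by $v^\top\tilde H_1 v\ge\gamma\|v\|^2$ uniformly in $D$. Your zero-padding of $\tilde H_2(D)$ at the $\lambda$ positions only makes explicit a block alignment that the paper leaves implicit.
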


\begin{proof}

By Proposition~\ref{prop:H1},
\[
\tilde H_1\succ0.
\]

Furthermore, Proposition~\ref{prophtwo} shows that
\[
\tilde H_2(D)\succeq0
\]
for every admissible generalized derivative selection
\[
D(z)\in\partial\phi(z).
\]

Since $\tilde H_1\succ0$, there exists a constant
$\gamma>0$ such that
\[
v^\top\tilde H_1v
\ge
\gamma\|v\|^2
\]
for all $v$.

Therefore,
\[
v^\top(\tilde H_1+\tilde H_2(D))v
=
v^\top\tilde H_1v
+
v^\top\tilde H_2(D)v
\ge
\gamma\|v\|^2.
\]

Hence
\[
\tilde H_1+\tilde H_2(D)\succ0.
\]

\end{proof}

\subsection{Higher-order remainder}

The remaining second-order contributions originate from the derivatives of the constraint Jacobians. Collecting all terms that are not contained in $\Phi''_{1}$ or
$\Phi''_{2}(D)$ yields

\[
\Phi''_{3}(D)
=
\begin{pmatrix}
(\nabla_y k_y^\top)Dz &
(\nabla_u k_y^\top)Dz &
0
\\[1ex]
(\nabla_y k_u^\top)Dz &
(\nabla_u k_u^\top)Dz &
0
\\[1ex]
0 & 0 & 0
\end{pmatrix}.
\]

The matrix $\Phi''_{3}(D)$ contains only second derivatives
of the inequality constraint mapping $k$. Since all entries are multiplied by $Dz$, these terms vanish at points satisfying $Dz=0$. Consequently, $\Phi''_{3}(D)$ acts as a higher-order perturbation of the dominant quadratic contributions.

We analyze the different cases that can occur. 

\paragraph{Case 1: Strictly Inactive Constraint}
Assume
$
0>\alpha k+\mu.
$

Then
\[
D(z)=0,
\]
hence $\Phi''_3=0$. 

\paragraph{Case 2: Strictly Active Constraint}

Assume
$
0<\alpha k+\mu.
$

Then
\[
D(z)=I.
\]

In this case, we can only conclude that, assuming bounded second derivatives of $k$, there exists
a constant $C>0$ such that
\begin{align*}
    ||\Phi''_3(I)||\leq C ||z||.
\end{align*}

\paragraph{Case 3: Active-Set Boundary}

Assume
$
0=\alpha k+\mu.
$

At the active-set boundary the generalized derivative becomes set-valued, i.e., 
\[
D(z)\in[0,I].
\]
However, due to $z=0$ we obtain directly $\Phi''_3(D)=0.$

Consequently, active-set transitions themselves do not affect the local coercivity analysis. The only nontrivial perturbations arise from strictly active constraints.

The previous case distinction shows that
$\tilde H_3(D)$ vanishes identically for inactive constraints and at active-set boundaries. Hence, only strictly active constraints may contribute to the perturbation term. In this regime,
$\tilde H_3(D)$ remains of order $\mathcal O(\|z\|)$ and therefore can be controlled locally.


\subsection{Result on Positive Definiteness}

\begin{theorem}
\label{thm:fullhessian}

Assume the hypotheses of Theorem~\ref{thm:H1H2}.
Further assume that the second derivatives of $k$ are bounded in a neighborhood of the solution.

Then there exists a neighborhood $\mathcal U$ of the solution such that  
\[
H(D)
=
\tilde H_1+\tilde H_2(D)+\tilde H_3(D)
\succ0
\]
for every admissible generalized derivative selection
$D\in\partial\phi(z)$ and every iterate in $\mathcal U$.
\end{theorem}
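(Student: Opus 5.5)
The plan is a perturbation argument. Theorem~\ref{thm:H1H2} gives uniform coercivity of the quadratic part, and the case analysis of the previous subsection shows that the remainder $\tilde H_3(D)$ is small near the solution. The one quantity that has to be made small is $z=\alpha k+\mu$ on the strictly active components.

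First, I will extract a coercivity constant that does not depend on $D$ or on the iterate. Theorem~\ref{thm:H1H2} shows that $\tilde H_1\succ0$ with some constant $\gamma$ and that $\tilde H_2(D)\succeq0$ for every $D$. The assumed strict matrix inequalities involve continuous quantities ($\Delta G_y$, $N_{yy}$, $k_y$). By continuity they therefore hold at the solution with a margin, and they persist with a fixed $\gamma_0>0$ on some neighborhood $\mathcal U_0$. This gives
\[
v^\top(\tilde H_1+\tilde H_2(D))v\ge\gamma_0\|v\|^2
\]
for all $v$, all $D\in[0,I]$ and all iterates in $\mathcal U_0$. Uniformity in $D$ is automatic, because $\tilde H_2(D)\succeq0$ is simply dropped from the lower bound.

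Second, I will bound the remainder uniformly in $D$. The case distinction (inactive, strictly active, boundary) applies componentwise. Each component $i$ contributes $d_iz_i\nabla^2k_i$ to $\Phi''_3(D)$. This contribution vanishes when $z_i<0$ (since $d_i=0$) and when $z_i=0$. Otherwise it is bounded by $|z_i|\,\|\nabla^2k_i\|$, using $0\le d_i\le1$. Bounded second derivatives of $k$ then give
\[
\|\tilde H_3(D)\|\le C\|\phi(z)\|\le C\|z\|
\]
for every admissible $D$. The other parts of $\tilde H_3$ are the higher-order coupling terms inherited from the equality-constrained analysis. For these I will invoke \cite{walther2018equality}, which shows they are of the same small order near the solution.

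Third, I will close the argument. I need a neighborhood $\mathcal U\subset\mathcal U_0$ on which $C\|\phi(z)\|\le\gamma_0/2$. Then
\[
v^\top H(D)v\ge(\gamma_0-\|\tilde H_3(D)\|)\|v\|^2\ge\tfrac{\gamma_0}{2}\|v\|^2.
\]

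This last step is where I expect the real obstacle. At a KKT point, the complementarity conditions \eqref{eq:slack_complementarity} together with Proposition~\ref{prop1} give $\phi(z^*)=\max(0,\alpha k^*+\mu^*)=\mu^*$, and this need not be zero on strictly active constraints. So $\|\phi(z)\|$ is not automatically small near the solution. My first attempt will use a finer estimate. The remainder is really $\sum_i\phi(z_i)\nabla^2k_i$, which at the solution equals $\sum_i\mu_i^*\nabla^2k_i$. This is precisely the constraint-curvature part of the Hessian of the ordinary Lagrangian, and it can be absorbed into $H_{11}$ by reading the assumption on $N_{yy}$ as including the $\mu^\top k$ terms, since $\mathcal N$ already contains $\mu^\top k$. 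What then remains is only $\phi(z)-\phi(z^*)$, which is $O(\|(y,u,\mu)-(y^*,u^*,\mu^*)\|)$ by the Lipschitz continuity of $\phi$ and $k$. That difference can be made smaller than $\gamma_0/2$ by shrinking $\mathcal U$.

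If this absorption cannot be justified from the stated hypotheses, the fallback is to require explicitly that $C\|\mu^*\|<\gamma_0$. In that case the neighborhood argument goes through unchanged.
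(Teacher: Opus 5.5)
Your Steps 1 and 2 follow the paper's proof and are more careful than it. The paper takes a uniform $\gamma$ from Theorem~\ref{thm:H1H2}, quotes $\|\tilde H_3(D)\|\le C\|z\|$ from the case analysis, and concludes with Weyl's inequality. Three of your refinements are real improvements: the continuity argument giving a $\gamma_0$ uniform on a neighbourhood, the componentwise case distinction, and the sharper bound $C\|\phi(z)\|$ via $d_iz_i=\phi(z_i)$. The paper's bound in $\|z\|$ cannot even be small near a solution with an inactive constraint, since there $z_i^*=\alpha k_i^*<0$.

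The paper closes with nothing more than ``for sufficiently small $\|z\|$''. Your objection to exactly this step is correct. $\Delta\mu=0$ in \eqref{eq:os_step} forces $\phi(z^*)=\mu^*$. Hence the $(y,u)$ block $\sum_i\phi(z_i)\nabla^2k_i$ of $\Phi''_3(D)$ stays close to $\sum_i\mu_i^*\nabla^2k_i(y^*,u^*)$, which is of order one unless $\mu^*=0$. So the published proof does not supply what you are missing. With $\tilde H_3=\Phi''_3$ as the paper defines it, the conclusion does not follow from the listed hypotheses without further information on this multiplier-weighted curvature.

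Your absorption is the right repair, but state it as a change of the splitting rather than as a reading of the $N_{yy}$ hypothesis. By Proposition~\ref{prop1}, $\phi(z_i)=\mu_i+\alpha(\Delta\mu)_i$, hence $\sum_i\phi(z_i)\nabla^2k_i=\sum_i\mu_i\nabla^2k_i+\alpha\sum_i(\Delta\mu)_i\nabla^2k_i$.

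Put the first sum into the $(y,u)$ part of $H_{11}$. Then, as in the equality-constrained setting, that block contains the full Lagrangian curvature $\nabla^2_{(y,u)}\mathcal N$ at the current iterate. Let $\tilde H_3$ be the second sum. It is the analogue of the residual-weighted curvature $\alpha\sum_ih_i\nabla^2h_i$ produced by a quadratic penalty on an equality constraint $h=0$. It satisfies $\|\tilde H_3\|\le\alpha C\|\Delta\mu\|\le\alpha C\|s\|$ and vanishes at the solution. Nothing has to be frozen at $\mu^*$, and your Step 3 goes through on a neighbourhood where $\|s\|$ is small.

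The price is twofold. First, the paper's $\Phi''_3(D)$ must be replaced, otherwise the constraint curvature is counted twice. Second, $H_{11}\succ0$ must now be assumed with $\sum_i\mu_i\nabla^2k_i$ present in the whole $(y,u)$ block. The displayed condition only involves $N_{yy}$, so this is effectively a second-order sufficiency assumption on the original problem. It is a strengthened hypothesis, just like your fallback $C\|\mu^*\|<\gamma_0$.

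With either addition your argument is complete. Without one, neither your proof nor the paper's establishes the theorem as stated.
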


\begin{proof}

We follow the ideas of \cite{walther2018equality}. 
By Theorem~\ref{thm:H1H2} there exists a constant
$\gamma>0$ such that
\[
\lambda_{\min} \bigl(\tilde H_1+\tilde H_2(D)
\bigr) \ge \gamma
\]
for every admissible generalized derivative selection
$D\in\partial\phi(z)$.

Furthermore, the previous discussion shows that
\[
\|\tilde H_3(D)\| \le C\|z\|.
\]

Hence, for sufficiently small $\|z\|$,
\[
\|\tilde H_3(D)\| < \gamma .
\]

Since
\[
\lambda_{\min}(\tilde H_1+\tilde H_2(D))
\ge \gamma,
\]
uniformly in $D\in\partial\phi(z)$, the positive definiteness estimate
is independent of the particular active-set selection.

Weyl's eigenvalue perturbation theorem therefore yields
\[
\lambda_{\min}(H(D)) \ge
\lambda_{\min}
\bigl(
\tilde H_1+\tilde H_2(D)
\bigr)
-
\|\tilde H_3(D)\|
>
0.
\]

Consequently,
\[
H(D)\succ0.
\]
\end{proof}

\section{Conclusions and Outlook}

We extended the doubly augmented one-shot framework to optimization problems involving additional inequality constraints. After elimination of the slack variables, the resulting formulation preserves the characteristic coupled one-shot matrix structure while introducing nonsmooth active-set dependent contributions. Using generalized second-order subdifferentials in the sense of Clarke and Rockafellar--Wets, explicit positivity conditions guaranteeing positive definiteness of all admissible representative generalized
Hessians, providing second-order sufficient conditions for strict local
optimality of the transformed doubly augmented Lagrangian, were established.

The analysis reveals that the additional inequality constraints enter
the positivity conditions only through the active constraint Jacobian
$k_y$. In particular, the condition
\[
\alpha\beta k_yk_y^\top\succ I
\]
can be enforced through suitable choices of the augmentation parameters
provided that the active constraints satisfy a full-rank condition.

Future work will focus on semismooth local convergence theory, the construction of preconditioners, and on
large-scale PDE-constrained applications.

The idea here is to define a preconditioner $B$ for the design update and a precondition $\check B$ for the constraint multiplier update given by
\[
\check B = \beta k_yk_y^\top + \frac1\alpha(D(z)-I) +
\varepsilon I.
\]

In the equality constraint setup, the multiplier update is approximated by
\[
\check B\Delta\mu
\approx
\nabla_\mu \tilde L^a(\mu+\Delta\mu)
-
\nabla_\mu \tilde L^a(\mu),
\]
where we fix all other variables of the augmented Lagrangian and use finite differences for the derivative with respect to $\mu.$ We aim to expand this idea to the problem with inequality constraints. 


\section*{Acknowledgements}
This work was funded by the Deutsche Forschungsgemeinschaft (DFG, German Research Foundation)
– GRK 2982, 516090167, ``Mathematics of Interdisciplinary Multiobjective Optimization''.

\end{document}